\newtheorem{theorem}{Theorem}[section]
\newtheorem{corollary}[theorem] {Corollary}
\newtheorem{definition}[theorem]{Definition}
\title{This is the title}
\begin{document}
\hrule\hrule\hrule\hrule\hrule
\vspace{0.3cm}	
\begin{center}
{\bf\large{{Noncommutative Heisenberg-Robertson-Schrodinger  Uncertainty Principles}}}\\
\vspace{0.3cm}
\hrule\hrule\hrule\hrule\hrule
\vspace{0.3cm}
\textbf{K. Mahesh Krishna}\\
School of Mathematics and Natural Sciences\\
Chanakya University Global Campus\\
NH-648, Haraluru Village\\
Devanahalli Taluk, 	Bengaluru  Rural District\\
Karnataka State, 562 110, India\\
Email: kmaheshak@gmail.com\\

Date: \today
\end{center}

\hrule\hrule
\vspace{0.5cm}
%--------------------------------------
\textbf{Abstract}: Let $\mathcal{E}$ be a  Hilbert C*-module  over a unital C*-algebra $\mathcal{A}$. 	Let  $A:	\mathcal{D}(A) \subseteq \mathcal{E} \to  \mathcal{E}$ and $B:	\mathcal{D}(B)\subseteq \mathcal{E}\to  \mathcal{E}$  be  possibly unbounded self-adjoint morphisms. Then for all $x \in \mathcal{D}(AB)\cap  \mathcal{D}(BA)$ with $\langle x, x \rangle =1$, we show that 
\begin{align*}
(1)	\quad \Delta _x(B)^2d_x(A)^2+\Delta _x(A)^2d_x(B)^2\geq \frac{(\langle \{A,B\}x, x \rangle -\{\langle Ax, x \rangle,\langle Bx, x \rangle\})^2-(\langle [A,B]x, x \rangle +[\langle Ax, x \rangle,\langle Bx, x \rangle])^2}{2}
\end{align*}
and 
\begin{align*}
(2)	\quad \Delta _x(A)\Delta _x(B)\geq \frac{\sqrt{\|(\langle \{A,B\}x, x \rangle -\{\langle Ax, x \rangle,\langle Bx, x \rangle\})^2-(\langle [A,B]x, x \rangle +[\langle Ax, x \rangle,\langle Bx, x \rangle])^2\|}}{2}, 
\end{align*}
where 
\begin{align*}
		&\Delta _x(A)\coloneqq \|Ax-\langle Ax, x \rangle x \|, \quad 	d_x(A)\coloneqq \sqrt{\langle Ax, Ax \rangle -\langle Ax, x \rangle^2},\\
		&[A,B] \coloneqq AB-BA, \quad \{A,B\}\coloneqq AB+BA, \\
		& \{\langle Ax, x \rangle,\langle Bx, x \rangle\}\coloneqq \langle Ax, x \rangle\langle Bx, x \rangle +\langle Bx, x \rangle\langle Ax, x \rangle, \\
		& [\langle Ax, x \rangle,\langle Bx, x \rangle]\coloneqq \langle Ax, x \rangle\langle Bx, x \rangle -\langle Bx, x \rangle\langle Ax, x \rangle.
\end{align*}
We call Inequalities (1) and (2) as noncommutative  Heisenberg-Robertson-Schrodinger uncertainty principles. They  reduce to the Heisenberg-Robertson-Schrodinger uncertainty principle  (derived by Schrodinger in 1930) whenever $\mathcal{A}=\mathbb{C}$.\\
\textbf{Keywords}:  Uncertainty Principle,  Hilbert C*-module.\\
\textbf{Mathematics Subject Classification (2020)}:  46L08.\\

\hrule

%\tableofcontents
\hrule
\section{Introduction}
Let $\mathcal{H}$ be a complex Hilbert space and $A$ be a possibly unbounded self-adjoint operator defined on the domain $\mathcal{D}(A)\subseteq \mathcal{H}$. For $h \in \mathcal{D}(A)$ with $\|h\|=1$, define the \textbf{uncertainty} of $A$ at the point $h$ as 
\begin{align*}
	\Delta _h(A)\coloneqq \|Ah-\langle Ah, h \rangle h \|=\sqrt{\|Ah\|^2-\langle Ah, h \rangle^2}. 
\end{align*}
In 1929, Robertson \cite{ROBERTSON} derived the following mathematical form of the uncertainty principle (also known as uncertainty relation) by  Heisenberg  in 1927 \cite{HEISENBERG}. Recall that for two linear operators $A:	\mathcal{D}(A)\subseteq \mathcal{H}\to  \mathcal{H}$ and $B:\mathcal{D}(B)\subseteq \mathcal{H}\to  \mathcal{H}$, we define the commutator $[A,B] \coloneqq AB-BA$ and the anti-commutator $\{A,B\}\coloneqq AB+BA$.
\begin{theorem} \cite{ROBERTSON, HEISENBERG, VONNEUMANNBOOK, DEBNATHMIKUSINSKI}  (\textbf{Heisenberg-Robertson Uncertainty Principle})
	Let  $A:\mathcal{D}(A)\subseteq \mathcal{H}\to  \mathcal{H}$ and $B:\mathcal{D}(B)\subseteq \mathcal{H}\to  \mathcal{H}$  be self-adjoint operators. Then for all $h \in \mathcal{D}(AB)\cap  \mathcal{D}(BA)$ with $\|h\|=1$, we have 
	\begin{align}\label{HR}
		\frac{1}{2} \left(\Delta _h(A)^2+	\Delta _h(B)^2\right)\geq \frac{1}{4} \left(\Delta _h(A)+	\Delta _h(B)\right)^2 \geq  \Delta _h(A)	\Delta _h(B)   \geq  \frac{1}{2}|\langle [A,B]h, h \rangle |.
	\end{align}
\end{theorem}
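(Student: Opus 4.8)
The plan is to treat the chain of four quantities in three pieces, noting that the two leftmost inequalities are purely elementary while only the rightmost one carries analytic content. Writing $a \coloneqq \Delta_h(A)$ and $b \coloneqq \Delta_h(B)$, the first inequality $\frac{1}{2}(a^2+b^2) \geq \frac{1}{4}(a+b)^2$ is, after clearing denominators, equivalent to $(a-b)^2 \geq 0$, and the middle inequality $\frac{1}{4}(a+b)^2 \geq ab$ is equivalent to the same statement $(a-b)^2 \geq 0$. Both are therefore immediate, so the entire problem reduces to establishing the Robertson bound $ab \geq \frac{1}{2}\abs{\langle [A,B]h, h \rangle}$.

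For the main inequality I would first pass to the centered operators $\tilde{A} \coloneqq A - \langle Ah, h \rangle I$ and $\tilde{B} \coloneqq B - \langle Bh, h \rangle I$. Because $A,B$ are self-adjoint the scalars $\langle Ah, h \rangle$ and $\langle Bh, h \rangle$ are real, so $\tilde{A}$ and $\tilde{B}$ are again self-adjoint, and by definition $\Delta_h(A) = \|\tilde{A}h\|$, $\Delta_h(B) = \|\tilde{B}h\|$. Crucially the scalar shifts are central, so $[\tilde{A},\tilde{B}] = [A,B]$ and hence $\langle [A,B]h, h \rangle = \langle \tilde{A}\tilde{B}h, h \rangle - \langle \tilde{B}\tilde{A}h, h \rangle$.

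The key algebraic step is then to use self-adjointness to move each operator across the inner product: $\langle \tilde{A}\tilde{B}h, h \rangle = \langle \tilde{B}h, \tilde{A}h \rangle$ and $\langle \tilde{B}\tilde{A}h, h \rangle = \langle \tilde{A}h, \tilde{B}h \rangle = \overline{\langle \tilde{B}h, \tilde{A}h \rangle}$. Thus $\langle [A,B]h, h \rangle = 2i\,\mathrm{Im}\langle \tilde{B}h, \tilde{A}h \rangle$ is purely imaginary, so $\abs{\langle [A,B]h, h \rangle} = 2\,\abs{\mathrm{Im}\langle \tilde{B}h, \tilde{A}h \rangle} \leq 2\,\abs{\langle \tilde{B}h, \tilde{A}h \rangle}$. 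Finally Cauchy--Schwarz gives $\abs{\langle \tilde{B}h, \tilde{A}h \rangle} \leq \|\tilde{B}h\|\,\|\tilde{A}h\| = \Delta_h(B)\,\Delta_h(A)$, which delivers the bound.

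The main obstacle is less conceptual than a matter of bookkeeping for possibly unbounded operators: one must justify the adjoint manipulations. The hypothesis $h \in \mathcal{D}(AB)\cap \mathcal{D}(BA)$ is exactly what guarantees that $\langle \tilde{A}\tilde{B}h, h \rangle$ and $\langle \tilde{B}\tilde{A}h, h \rangle$ are defined and that the identities $\langle \tilde{A}(\tilde{B}h), h \rangle = \langle \tilde{B}h, \tilde{A}h \rangle$ may be applied legitimately, so I would verify that each move stays within the stated domains. Everything else is the two elementary square inequalities together with a single application of Cauchy--Schwarz.
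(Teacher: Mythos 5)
Your proof is correct: the two leftmost inequalities do reduce to $(a-b)^2\geq 0$, and the centering argument, the identity $\langle [A,B]h, h \rangle = 2i\,\mathrm{Im}\langle \tilde{B}h, \tilde{A}h \rangle$, and the final Cauchy--Schwarz step are all valid under the stated domain hypothesis. The paper itself offers no proof of this theorem (it is quoted as a classical result with citations), but your argument is the standard one and is precisely the scalar specialization of the method the paper uses to prove its noncommutative generalization in Theorem \ref{NH}.
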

In 1930,  Schrodinger improved Inequality (\ref{HR}) \cite{SCHRODINGER}. 
\begin{theorem} \cite{SCHRODINGER, BERTLMANNFRIIS} \label{SE}
	(\textbf{Heisenberg-Robertson-Schrodinger  Uncertainty Principle})
	Let  $A:	\mathcal{D}(A)\subseteq \mathcal{H}\to  \mathcal{H}$ and $B:	\mathcal{D}(B)\subseteq \mathcal{H}\to  \mathcal{H}$  be self-adjoint operators. Then for all $h \in \mathcal{D}(AB)\cap  \mathcal{D}(BA)$ with $\|h\|=1$, we have 
	\begin{align*}
		\Delta _h(A)	\Delta _h(B)    \geq |\langle Ah, Bh \rangle-\langle Ah, h \rangle \langle Bh, h \rangle|
		&=\frac{\sqrt{|\langle \{A,B\}h, h \rangle -2\langle Ah, h \rangle\langle Bh, h \rangle|^2+|\langle [A,B]h, h \rangle |^2}}{2}\\
		&=\frac{\sqrt{(\langle \{A,B\}h, h \rangle -2\langle Ah, h \rangle\langle Bh, h \rangle)^2-\langle [A,B]h, h \rangle ^2}}{2}.
	\end{align*}	
\end{theorem}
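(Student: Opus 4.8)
The plan is to derive the entire statement from a single application of the Cauchy--Schwarz inequality to two suitably centered vectors. Fix $h \in \mathcal{D}(AB)\cap\mathcal{D}(BA)$ with $\|h\|=1$ and set $a \coloneqq \langle Ah,h\rangle$ and $b \coloneqq \langle Bh,h\rangle$. Since $A$ and $B$ are self-adjoint, both $a$ and $b$ are real. I would introduce the centered vectors $u \coloneqq Ah - a h$ and $v \coloneqq Bh - b h$, so that by definition $\Delta_h(A) = \|u\|$ and $\Delta_h(B) = \|v\|$. The Cauchy--Schwarz inequality then gives $\Delta_h(A)\,\Delta_h(B) = \|u\|\,\|v\| \geq |\langle u,v\rangle|$, and the whole statement reduces to computing and then rewriting $\langle u,v\rangle$.

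First I would expand $\langle u,v\rangle = \langle Ah - a h,\, Bh - b h\rangle$ by bilinearity. Using that $a,b$ are real, that $\langle h,Bh\rangle = \overline{b} = b$, and that $\|h\|=1$, the three correction terms collapse to a single $-ab$, yielding
\begin{align*}
\langle u,v\rangle = \langle Ah,Bh\rangle - \langle Ah,h\rangle\langle Bh,h\rangle.
\end{align*}
Taking absolute values already establishes the first asserted inequality $\Delta_h(A)\,\Delta_h(B)\geq |\langle Ah,Bh\rangle - \langle Ah,h\rangle\langle Bh,h\rangle|$.

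For the two equalities I would split $z \coloneqq \langle Ah,Bh\rangle - ab$ into real and imaginary parts. Writing $\langle Ah,Bh\rangle = \langle BAh,h\rangle$ (self-adjointness of $B$) and $\overline{\langle Ah,Bh\rangle} = \langle ABh,h\rangle$, I obtain $\operatorname{Re} z = \tfrac12\langle\{A,B\}h,h\rangle - ab$ and $\operatorname{Im} z = \tfrac{1}{2i}\langle (BA - AB)h,h\rangle = -\tfrac{1}{2i}\langle[A,B]h,h\rangle$. The key structural observation is that $[A,B]$ is skew-adjoint for self-adjoint $A,B$, so $\langle[A,B]h,h\rangle$ is purely imaginary; hence $(\operatorname{Im} z)^2 = \tfrac14|\langle[A,B]h,h\rangle|^2$. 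Assembling $|z|^2 = (\operatorname{Re} z)^2 + (\operatorname{Im} z)^2$ produces the first radical form, with the real quantity $\langle\{A,B\}h,h\rangle - 2ab$ satisfying $(\cdot)^2 = |\cdot|^2$. Finally, since $\langle[A,B]h,h\rangle$ is purely imaginary we have $\langle[A,B]h,h\rangle^2 = -|\langle[A,B]h,h\rangle|^2$, which converts the $+|\langle[A,B]h,h\rangle|^2$ term into $-\langle[A,B]h,h\rangle^2$ and gives the last form.

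The computation is elementary; the only point requiring care is the bookkeeping of conjugates in the expansion of $\langle u,v\rangle$ (ensuring the cross terms genuinely combine into $-ab$ with no residual imaginary part), together with the skew-adjointness observation that makes $\langle[A,B]h,h\rangle$ purely imaginary, which is precisely what allows the two radical forms to coincide. I expect no real obstacle here, as this is exactly the classical $\mathcal{A}=\mathbb{C}$ argument; the substance of the paper will lie in lifting it to Hilbert C*-modules, where $a$ and $b$ become noncommuting algebra elements and the real/imaginary split must be replaced by the symmetric/antisymmetric decomposition appearing in Inequalities (1) and (2) of the abstract.
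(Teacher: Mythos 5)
Your proof is correct: the centering $u=Ah-\langle Ah,h\rangle h$, $v=Bh-\langle Bh,h\rangle h$, the Cauchy--Schwarz step, and the real/imaginary decomposition of $\langle u,v\rangle$ via the anticommutator and the skew-adjoint commutator all check out, and the two radical forms coincide precisely because $\langle [A,B]h,h\rangle$ is purely imaginary, as you note. The paper does not actually prove this classical statement (it is quoted from the literature), but your argument is essentially the same strategy the paper deploys for its noncommutative generalization in Theorem \ref{NH}, where the identical centered vectors $z,w$ and the noncommutative Cauchy--Schwarz inequality appear, and the real/imaginary split is replaced by the symmetric/antisymmetric decomposition you anticipate in your closing remark.
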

We are fundamentally concerned with the question: What is the  noncommutative analogues of Theorem  \ref{SE}? We then naturally look to  the notion of Hilbert C*-modules which  are first introduced by Kaplansky \cite{KAPLANSKY} for modules over commutative C*-algebras and later developed for modules over arbitrary C*-algebras by Paschke  \cite{PASCHKE} and Rieffel \cite{RIEFFEL}. 
\begin{definition}\cite{KAPLANSKY, PASCHKE, RIEFFEL}
	Let $\mathcal{A}$ be a  unital C*-algebra. A left module 	 $\mathcal{E}$  over $\mathcal{A}$ is said to be a (left)  \textbf{Hilbert C*-module} if there exists a  map $ \langle \cdot, \cdot \rangle: \mathcal{E}\times \mathcal{E} \to \mathcal{A}$ such that the following hold. 
	\begin{enumerate}[\upshape(i)]
		\item $\langle x, x \rangle  \geq 0$, $\forall x \in \mathcal{E}$. If $x \in  \mathcal{E}$ satisfies $\langle x, x \rangle  =0 $, then $x=0$.
		\item $\langle x+y, z \rangle  =\langle x, z \rangle+\langle y, z \rangle$, $\forall x,y,z \in \mathcal{E}$.
		\item  $\langle ax, y \rangle  =a\langle x, y \rangle$, $\forall x,y \in \mathcal{E}$, $\forall a \in \mathcal{A}$.
		\item $\langle x, y \rangle=\langle y,x \rangle^*$, $\forall x,y \in \mathcal{E}$.
		\item $\mathcal{E}$ is complete w.r.t. the norm $\|x\|\coloneqq \sqrt{\|\langle x, x \rangle\|}$,  $\forall x \in \mathcal{E}$.
	\end{enumerate}
\end{definition}
We are going to use the following noncommutative Cauchy-Schwarz inequality. 
\begin{theorem}\cite{PASCHKE, RIEFFEL}
Let $\mathcal{E}$ be a  Hilbert C*-module. Then 
\begin{align*}
	\langle x, y \rangle 	\langle y, x \rangle \leq \|	\langle y, y \rangle \|	\langle x, x \rangle , \quad \forall x, y \in \mathcal{E}.
\end{align*}	
\end{theorem}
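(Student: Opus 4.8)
The plan is to derive this operator inequality in $\mathcal{A}$ directly from the positivity axiom (i), feeding a single cleverly chosen vector into it and then invoking one elementary C*-algebra fact: a positive element is dominated by a scalar multiple of the unit. First I would dispose of the degenerate case. If $\lambda \coloneqq \|\langle y, y \rangle\| = 0$, then $\langle y, y \rangle = 0$, so $y = 0$ by axiom (i); additivity (ii) applied to $0 = 0 + 0$ then forces $\langle x, y \rangle = 0$, so both sides of the claim vanish and the inequality is trivial. Hence I may assume $y \neq 0$, i.e. $\lambda > 0$.

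For the main case, the heart of the argument is to apply positivity to the vector $x - ay$ for an arbitrary $a \in \mathcal{A}$, which is to be specialized later. By axiom (i) we have $\langle x - ay, x - ay \rangle \geq 0$. Expanding with additivity (ii), conjugate symmetry (iv), and the module rule (iii)—and recording the derived identities $\langle ay, x \rangle = a\langle y, x \rangle$, $\langle x, ay \rangle = \langle x, y \rangle a^{*}$, and $\langle ay, ay \rangle = a\langle y, y \rangle a^{*}$—yields
\begin{align*}
0 \leq \langle x, x \rangle - \langle x, y \rangle a^{*} - a\langle y, x \rangle + a\langle y, y \rangle a^{*}.
\end{align*}

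The main obstacle is noncommutativity: one cannot simply complete the square and cancel $\langle y, y \rangle$, since it need not commute with $\langle x, y \rangle$. The device that resolves this is to replace the module inner product $\langle y, y \rangle$ by the scalar $\lambda$. Because $\langle y, y \rangle \geq 0$ we have $\langle y, y \rangle \leq \lambda 1$ in $\mathcal{A}$, and conjugating a positive inequality by $a$ preserves the order, so $a\langle y, y \rangle a^{*} \leq \lambda\, a a^{*}$. Substituting this bound and then specializing to $a = \lambda^{-1}\langle x, y \rangle$ (so that $a^{*} = \lambda^{-1}\langle y, x \rangle$) collapses the remaining terms, since each of $\langle x, y \rangle a^{*}$, $a\langle y, x \rangle$, and $\lambda\, a a^{*}$ equals $\lambda^{-1}\langle x, y \rangle \langle y, x \rangle$. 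The inequality reduces to $0 \leq \langle x, x \rangle - \lambda^{-1}\langle x, y \rangle \langle y, x \rangle$, and multiplying through by the positive scalar $\lambda$ gives $\langle x, y \rangle \langle y, x \rangle \leq \lambda\langle x, x \rangle = \|\langle y, y \rangle\|\langle x, x \rangle$, as claimed. The only routine point to verify is that multiplication by $\lambda^{-1} > 0$ respects the order on $\mathcal{A}$, which is immediate.
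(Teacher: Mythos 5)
Your proof is correct, and there is nothing in the paper to compare it against: the paper states this Cauchy--Schwarz inequality as a cited result from Paschke and Rieffel and gives no proof of its own. Your argument --- expand $0 \leq \langle x-ay,\, x-ay\rangle$, dominate $a\langle y,y\rangle a^{*}$ by $\lambda\, aa^{*}$ via $\langle y,y\rangle \leq \lambda 1$ (using that $\mathcal{A}$ is unital) and conjugation-positivity, then specialize $a = \lambda^{-1}\langle x,y\rangle$ --- is exactly the standard textbook proof (it appears, up to the left/right-module convention, in Lance's book on Hilbert C*-modules), and all the small points are handled properly: the degenerate case $\lambda = 0$ is disposed of before $\lambda^{-1}$ is used, and the order-preservation facts you invoke ($c \geq 0 \Rightarrow aca^{*} \geq 0$, and scaling by $\lambda^{-1} > 0$) are genuine. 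One cosmetic remark: in the degenerate case, $\langle x, 0\rangle = 0$ needs additivity in the \emph{second} slot, which is axiom (ii) combined with conjugate symmetry (iv), not (ii) alone; this is trivial but worth stating. For what it is worth, Paschke's original derivation takes a different route, passing through states $\phi$ of $\mathcal{A}$ and the scalar Cauchy--Schwarz inequality for the semi-inner products $\phi(\langle \cdot, \cdot\rangle)$; your completing-the-square argument is more elementary and stays entirely inside the algebra, at the modest cost of needing the unit (or an approximate unit in the non-unital case) to write $\langle y,y\rangle \leq \lambda 1$.
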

We derive noncommutative analogues of Theorem \ref{SE} in Theorem \ref{NH}.

\section{Noncommutative Heisenberg-Robertson-Schrodinger  Uncertainty Principles}

Let $\mathcal{E}$ be a  Hilbert C*-module  over a unital C*-algebra $\mathcal{A}$ and $A$ be a possibly unbounded self-adjoint morphism defined on domain $\mathcal{D}(A)\subseteq \mathcal{E}$. For $x \in \mathcal{D}(A)$ with $\langle x, x\rangle =1$, we define the \textbf{noncommutative norm-uncertainty} of $A$ at the point $x$ as 
\begin{align*}
	\Delta _x(A)\coloneqq \|Ax-\langle Ax, x \rangle x \|=\sqrt{\|\langle Ax, Ax \rangle -\langle Ax, x \rangle^2\|}. 
\end{align*}
We  define the \textbf{noncommutative modular-uncertainty} of $A$ at the point $x$ as 
\begin{align*}
	d_x(A)\coloneqq \sqrt{\langle Ax, Ax \rangle -\langle Ax, x \rangle^2}=\sqrt{\langle Ax -\langle Ax, x \rangle x, Ax -\langle Ax, x \rangle x\rangle}.
\end{align*}
Then $d_x(A)\in \mathcal{A}$ and 
\begin{align*}
	\Delta _x(A)=\|	d_x(A)\|.
\end{align*}
Given two elements $a, b \in \mathcal{A}$, we define $[a,b]\coloneqq ab-ba$ and $\{a,b\}\coloneqq ab+ba$. Similarly, for two morphisms $A:	\mathcal{D}(A)\subseteq \mathcal{E}\to  \mathcal{E}$ and $B:	\mathcal{D}(B)\subseteq \mathcal{E}\to  \mathcal{E}$, we define $[A,B] \coloneqq AB-BA$ and $\{A,B\}\coloneqq AB+BA$.
\begin{theorem}	(\textbf{Noncommutative Heisenberg-Robertson-Schrodinger  Uncertainty Principles})\label{NH}
Let $\mathcal{E}$ be a  Hilbert C*-module  over a unital C*-algebra $\mathcal{A}$. 	Let  $A:	\mathcal{D}(A)\subseteq \mathcal{E}\to  \mathcal{E}$ and $B:	\mathcal{D}(B)\subseteq \mathcal{E}\to  \mathcal{E}$  be self-adjoint morphisms. Then for all $x \in \mathcal{D}(AB)\cap  \mathcal{D}(BA)$ with $\langle x, x \rangle =1$, we have 	
	\begin{enumerate}[\upshape(i)]
		\item 
			\begin{align*}
			\Delta _x(B)^2d_x(A)^2&\geq (\langle Ax, Bx \rangle-\langle Ax, x \rangle\langle Bx, x \rangle)(\langle Bx, Ax \rangle-\langle Bx, x \rangle\langle Ax, x \rangle)\\
			&\geq -\frac{(\langle [A,B]x, x\rangle +[\langle Ax, x \rangle,\langle Bx, x \rangle])^2}{4}\\
			&=\frac{(\langle [A,B]x, x \rangle +[\langle Ax, x \rangle,\langle Bx, x \rangle])(\langle x, [A,B]x \rangle +[\langle Ax, x \rangle,\langle Bx, x \rangle])}{4}.
		\end{align*}
	\item 
		\begin{align*}
		\Delta _x(B)d_x(A)&\geq \sqrt{(\langle Ax, Bx \rangle-\langle Ax, x \rangle\langle Bx, x \rangle)(\langle Bx, Ax \rangle-\langle Bx, x \rangle\langle Ax, x \rangle)}\\
		&\geq \frac{\sqrt{(\langle [A,B]x, x \rangle +[\langle Ax, x \rangle,\langle Bx, x \rangle])(\langle x, [A,B]x \rangle +[\langle Ax, x \rangle,\langle Bx, x \rangle])}}{2}.
	\end{align*}
	\item 
		\begin{align*}
			\Delta _x(A)^2d_x(B)^2&\geq (\langle Bx, Ax \rangle-\langle Bx, x \rangle\langle Ax, x \rangle)(\langle Ax, Bx \rangle-\langle Ax, x \rangle\langle Bx, x \rangle)\\
				&\geq -\frac{(\langle [B,A]x, x\rangle +[\langle Bx, x \rangle,\langle Ax, x \rangle])^2}{4}\\
			&=\frac{(\langle [B,A]x, x \rangle +[\langle Bx, x \rangle,\langle Ax, x \rangle])(\langle x, [B,A]x \rangle +[\langle Bx, x \rangle,\langle Ax, x \rangle])}{4}.
	\end{align*}
\item 
	\begin{align*}
	\Delta _x(A)d_x(B)&\geq \sqrt{(\langle Bx, Ax \rangle-\langle Bx, x \rangle\langle Ax, x \rangle)(\langle Ax, Bx \rangle-\langle Ax, x \rangle\langle Bx, x \rangle)}\\
	&\geq \frac{\sqrt{(\langle [B,A]x, x \rangle +[\langle Bx, x \rangle,\langle Ax, x \rangle])(\langle x, [B,A]x \rangle +[\langle Bx, x \rangle,\langle Ax, x \rangle])}}{2}.
\end{align*}
\item 
\begin{align*}
	 \Delta _x(A)\Delta _x(B)&\geq \|\langle Ax, Bx \rangle-\langle Ax, x \rangle\langle Bx, x \rangle\|\\
	 &\geq\frac{\sqrt{\|(\langle [A,B]x, x \rangle +[\langle Ax, x \rangle,\langle Bx, x \rangle])(\langle x, [A,B]x \rangle +[\langle Ax, x \rangle,\langle Bx, x \rangle])\|}}{2}\\
	 &=\frac{\|\langle [A,B]x, x \rangle +[\langle Ax, x \rangle,\langle Bx, x \rangle]\|}{2}.
\end{align*}
\item 
\begin{align*}
	&\Delta _x(B)^2d_x(A)^2+\Delta _x(A)^2d_x(B)^2\geq \frac{(\langle \{A,B\}x, x \rangle -\{\langle Ax, x \rangle,\langle Bx, x \rangle\})^2-(\langle [A,B]x, x \rangle +[\langle Ax, x \rangle,\langle Bx, x \rangle])^2}{2}\\
	&\quad=\frac{(\langle \{A,B\}x, x \rangle -\{\langle Ax, x \rangle,\langle Bx, x \rangle\})^2+(\langle [A,B]x, x \rangle +[\langle Ax, x \rangle,\langle Bx, x \rangle])(\langle x, [A,B]x \rangle +[\langle Ax, x \rangle,\langle Bx, x \rangle])}{2}.
\end{align*}
\item 
\begin{align*}
&\Delta _x(A)\Delta _x(B)\geq \frac{\sqrt{\|(\langle \{A,B\}x, x \rangle -\{\langle Ax, x \rangle,\langle Bx, x \rangle\})^2-(\langle [A,B]x, x \rangle +[\langle Ax, x \rangle,\langle Bx, x \rangle])^2\|}}{2}\\
&\quad = \frac{\sqrt{\|(\langle \{A,B\}x, x \rangle -\{\langle Ax, x \rangle,\langle Bx, x \rangle\})^2+(\langle [A,B]x, x \rangle +[\langle Ax, x \rangle,\langle Bx, x \rangle])(\langle x, [A,B]x \rangle +[\langle Ax, x \rangle,\langle Bx, x \rangle])\|}}{2}.	
\end{align*}
\item If $\mathcal{A}$ is commutative, then 
\begin{align*}
		\Delta _x(B)^2d_x(A)^2+\Delta _x(A)^2d_x(B)^2&\geq \frac{(\langle \{A,B\}x, x \rangle -2\langle Ax, x \rangle\langle Bx, x \rangle)^2-\langle [A,B]x, x \rangle ^2}{2}\\
		&= \frac{(\langle \{A,B\}x, x \rangle -2\langle Ax, x \rangle\langle Bx, x \rangle)^2+\langle [A,B]x, x \rangle \langle x, [A,B]x \rangle }{2}.
\end{align*}
\item If $\mathcal{A}$ is commutative, then
\begin{align*}
	\Delta _x(A)\Delta _x(B)&\geq \frac{\sqrt{\|(\langle \{A,B\}x, x \rangle -2\langle Ax, x \rangle\langle Bx, x \rangle)^2-\langle [A,B]x, x \rangle^2\|}}{2}\\
	&=\frac{\sqrt{\|(\langle \{A,B\}x, x \rangle -2\langle Ax, x \rangle\langle Bx, x \rangle)^2+\langle [A,B]x, x \rangle\langle x, [A,B]x \rangle\|}}{2}.	
\end{align*}
	\end{enumerate}
\end{theorem}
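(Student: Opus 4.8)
The plan is to reduce everything to the single module-valued element
$C \coloneqq \langle Ax-\langle Ax,x\rangle x,\ Bx-\langle Bx,x\rangle x\rangle$
and to feed the noncommutative Cauchy--Schwarz inequality the two vectors $u\coloneqq Ax-\langle Ax,x\rangle x$ and $v\coloneqq Bx-\langle Bx,x\rangle x$. First I would record the three elementary inner-product computations, using $\langle x,x\rangle=1$, conjugate-linearity in the second slot ($\langle p,aq\rangle=\langle p,q\rangle a^*$), and the self-adjointness of the scalars $\langle Ax,x\rangle,\langle Bx,x\rangle\in\mathcal A$: namely $\langle u,u\rangle=d_x(A)^2$, $\langle v,v\rangle=d_x(B)^2$, and $\langle u,v\rangle=\langle Ax,Bx\rangle-\langle Ax,x\rangle\langle Bx,x\rangle=C$, whence $\langle v,u\rangle=C^*$. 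In particular $\Delta_x(A)=\|u\|=\sqrt{\|d_x(A)^2\|}$, and likewise for $B$.

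The second step is the pair of structural identities turning $C$ into the (anti)commutator data. Using only self-adjointness of $A,B$ (so that $\langle Ax,Bx\rangle=\langle x,ABx\rangle$, $\langle Bx,Ax\rangle=\langle x,BAx\rangle$) together with skew-adjointness of $[A,B]$ (so $\langle x,[A,B]x\rangle=-\langle[A,B]x,x\rangle$), I would show
\[ C+C^*=\langle\{A,B\}x,x\rangle-\{\langle Ax,x\rangle,\langle Bx,x\rangle\}, \qquad C^*-C=\langle[A,B]x,x\rangle+[\langle Ax,x\rangle,\langle Bx,x\rangle]. \]
Denote the right-hand sides by $G$ (self-adjoint) and $K$ (skew-adjoint). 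The one purely algebraic identity underpinning the headline inequalities is then $G^2-K^2=(C+C^*)^2-(C-C^*)^2=2(CC^*+C^*C)$, in which the $C^2$ and $(C^*)^2$ terms cancel; note it holds verbatim with no commutativity assumption.

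For the inequalities I would apply noncommutative Cauchy--Schwarz twice. With $(u,v)$ it gives $CC^*=\langle u,v\rangle\langle v,u\rangle\le\|\langle v,v\rangle\|\,\langle u,u\rangle=\Delta_x(B)^2d_x(A)^2$, and with $(v,u)$ it gives $C^*C\le\Delta_x(A)^2d_x(B)^2$; these are the leading inequalities of (i) and (iii), and adding them and inserting $CC^*+C^*C=\tfrac12(G^2-K^2)$ yields (vi). Statement (vii) follows from the norm form $\|C\|=\|\langle u,v\rangle\|\le\|u\|\,\|v\|=\Delta_x(A)\Delta_x(B)$ (which is (v)) together with $\|G^2-K^2\|=2\|CC^*+C^*C\|\le 2(\|CC^*\|+\|C^*C\|)=4\|C\|^2$, so that $\tfrac12\sqrt{\|G^2-K^2\|}\le\|C\|\le\Delta_x(A)\Delta_x(B)$. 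The square-root forms (ii) and (iv) come from (i) and (iii) by operator monotonicity of $t\mapsto\sqrt t$ on the positive cone, and (viii)--(ix) are the specializations in which $[\langle Ax,x\rangle,\langle Bx,x\rangle]=0$ and $\{\langle Ax,x\rangle,\langle Bx,x\rangle\}=2\langle Ax,x\rangle\langle Bx,x\rangle$.

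The main obstacle --- and the reason the sharp module statement is naturally phrased for the symmetric sum $\Delta_x(B)^2d_x(A)^2+\Delta_x(A)^2d_x(B)^2$ rather than for a single product --- is that over a noncommutative $\mathcal A$ the product $CC^*$ no longer splits as the sum of the squares of the self-adjoint and skew parts of $C$: writing $C=S+T$ with $S=S^*$ and $T^*=-T$, one has $CC^*=S^2-T^2+[T,S]$, and the obstructing cross term $[T,S]$ need not be positive, so the termwise scalar Schr\"odinger argument does not transfer. The cure is symmetrization, since $CC^*+C^*C=2(S^2-T^2)$ kills the cross terms and restores the clean lower bound $-2T^2=-\tfrac12K^2$. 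I would therefore carry the noncommutative corrections through $G$ and $K$ explicitly and invoke positivity and monotonicity only at the symmetrized stage, which is precisely where the identity $G^2-K^2=2(CC^*+C^*C)$ does the work.
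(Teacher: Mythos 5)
Where your argument applies, it is correct and is essentially the paper's own proof: the same vectors $u=Ax-\langle Ax,x\rangle x$, $v=Bx-\langle Bx,x\rangle x$ (the paper's $z,w$), the same two applications of noncommutative Cauchy--Schwarz giving the leading inequalities of (i) and (iii), the same identifications $C+C^*=\langle\{A,B\}x,x\rangle-\{\langle Ax,x\rangle,\langle Bx,x\rangle\}=:G$ and $C^*-C=\langle[A,B]x,x\rangle+[\langle Ax,x\rangle,\langle Bx,x\rangle]=:K$, and the same symmetrization identity $G^2-K^2=2(CC^*+C^*C)$ driving (vi). Your derivation of (vii) from $\|G^2-K^2\|\le 2(\|CC^*\|+\|C^*C\|)=4\|C\|^2\le 4\Delta_x(A)^2\Delta_x(B)^2$ is a clean (and slightly more careful) version of the paper's ``take norms on (vi)''. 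One loose end: you cite only the first inequality of (v); its second inequality also follows from your estimates, since $\|KK^*\|\le\|G^2+KK^*\|=\|G^2-K^2\|\le 4\|C\|^2$, and you should say so rather than leave (v) half-proved.

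The gap, measured against the theorem as stated, is that you never establish the second inequalities of (i)--(iv), i.e.\ the termwise bound $CC^*\ge-\tfrac14(C^*-C)^2$; you explicitly declare it unobtainable and retreat to the symmetrized sum. Your diagnosis is sound: writing $C=S+T$ with $S=S^*$, $T^*=-T$ gives $CC^*=S^2-T^2+[T,S]$, and $S^2+[T,S]$ need not be positive (already for $2\times2$ matrices one can make its determinant negative), so $CC^*\ge -T^2$ is not a valid general C*-inequality. But note that this observation indicts the paper's own proof of (i), which asserts exactly $CC^*\ge(\mathrm{Im}\,C)^2$ as if the scalar argument transferred verbatim; the displayed identity closing (i) likewise holds only when $[\langle Ax,x\rangle,\langle Bx,x\rangle]=0$, since $\langle x,[A,B]x\rangle+[\langle Ax,x\rangle,\langle Bx,x\rangle]$ equals $-K+2[\langle Ax,x\rangle,\langle Bx,x\rangle]$, not $-K$. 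So your proposal proves the headline results (v)--(ix) --- in particular inequalities (1) and (2) of the abstract --- by the paper's method, but it does not (and, as you correctly argue, cannot by this route) deliver the unsymmetrized second inequalities of (i)--(iv); those parts of the statement should be regarded as unproven rather than as an oversight in your write-up.
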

\begin{proof}
	Let $x \in \mathcal{D}(AB)\cap  \mathcal{D}(BA)$ be such that  $\langle x, x \rangle =1$.
	\begin{enumerate}[\upshape(i)]
		\item 	Using noncommutative Cauchy-Schwarz inequality, 
		\begin{align*}
			&(\langle Ax, Bx \rangle-\langle Ax, x \rangle\langle Bx, x \rangle)(\langle Bx, Ax \rangle-\langle Bx, x \rangle\langle Ax, x \rangle)\\
			&=\langle  Ax -\langle Ax, x \rangle x, Bx -\langle Bx, x \rangle x\rangle\langle  Bx -\langle Bx, x \rangle x, Ax -\langle Ax, x \rangle x\rangle\\
			&\leq \|\langle  Bx -\langle Bx, x \rangle x, Bx -\langle Bx, x \rangle x\rangle\|\langle  Ax -\langle Ax, x \rangle x, Ax -\langle Ax, x \rangle x\rangle\\
			&=\|Bx-\langle Bx, x \rangle x\|^2(\langle Ax, Ax \rangle -\langle Ax, x \rangle^2)=	\Delta _x(B)^2d_x(A)^2.
		\end{align*}
	We now note that 
	\begin{align*}
			&(\langle Ax, Bx \rangle-\langle Ax, x \rangle\langle Bx, x \rangle)(\langle Bx, Ax \rangle-\langle Bx, x \rangle\langle Ax, x \rangle)\\
			&\geq (\text{Im}(\langle Ax, Bx \rangle-\langle Ax, x \rangle\langle Bx, x \rangle))^2\\
			&=\left(\frac{\langle Ax, Bx \rangle-\langle Ax, x \rangle\langle Bx, x \rangle-\langle Bx, Ax \rangle-\langle Bx, x \rangle\langle Ax, x \rangle}{2i}\right)^2\\
			&=-\frac{(\langle [A,B]x, x\rangle +[\langle Ax, x \rangle,\langle Bx, x \rangle])^2}{4}.
	\end{align*}
Now we see that 
\begin{align*}
	-(\langle [A,B]x, x \rangle +[\langle Ax, x \rangle,\langle Bx, x \rangle])^2=(\langle [A,B]x, x \rangle +[\langle Ax, x \rangle,\langle Bx, x \rangle])(\langle x, [A,B]x \rangle +[\langle Ax, x \rangle,\langle Bx, x \rangle]).
\end{align*}
	\item Follows from (i) by noting that square root respects the order of positive elements.
	\item Similar to (i).
	\item Follows from (iii).
	\item We get by taking norm on (i) (norm respects the order of positive elements).
	\item  Define 
	\begin{align*}
		z\coloneqq Ax -\langle Ax, x \rangle x, \quad w\coloneqq Bx -\langle Bx, x \rangle x.
	\end{align*}
	Then 
	\begin{align*}
		\langle z, w\rangle+ \langle w, z\rangle&=\langle  Ax -\langle Ax, x \rangle x, Bx -\langle Bx, x \rangle x\rangle+\langle  Bx -\langle Bx, x \rangle x, Ax -\langle Ax, x \rangle x\rangle\\
		&=\langle BAx, x \rangle-\langle Ax, x \rangle\langle Bx, x \rangle+\langle ABx, x \rangle-\langle Bx, x \rangle\langle Ax, x \rangle\\
		&=\langle \{A,B\}x, x \rangle -\{\langle Ax, x \rangle,\langle Bx, x \rangle\}
	\end{align*}
	and 
	\begin{align*}
		\langle z, w\rangle- \langle w, z\rangle&=\langle  Ax -\langle Ax, x \rangle x, Bx -\langle Bx, x \rangle x\rangle-\langle  Bx -\langle Bx, x \rangle x, Ax -\langle Ax, x \rangle x\rangle\\
		&=\langle BAx, x \rangle-\langle Ax, x \rangle\langle Bx, x \rangle-\langle ABx, x \rangle+\langle Bx, x \rangle\langle Ax, x \rangle\\
		&=-\langle [A,B]x, x \rangle -[\langle Ax, x \rangle,\langle Bx, x \rangle].
	\end{align*}
Now  adding (i) and (iii) gives
\begin{align*}
	&\Delta _x(B)^2d_x(A)^2+\Delta _x(A)^2d_x(B)^2\geq \langle z, w\rangle \langle w, z\rangle+\langle w, z\rangle \langle z, w\rangle\\
	&=\frac{(\langle z, w\rangle+ \langle w, z\rangle)^2-(\langle z, w\rangle- \langle w, z\rangle)^2}{2}\\
	&=\frac{(\langle \{A,B\}x, x \rangle -\{\langle Ax, x \rangle,\langle Bx, x \rangle\})^2-(\langle [A,B]x, x \rangle +[\langle Ax, x \rangle,\langle Bx, x \rangle])^2}{2}.
\end{align*}
\item We get by taking  norm on (vi).
\item Follows from (vi). Note that 
\begin{align*}
	-\langle [A,B]x, x \rangle ^2=\langle [A,B]x, x \rangle \langle x, [A,B]x \rangle.
\end{align*}
\item Follows from (vii).
	\end{enumerate}
\end{proof}
\begin{corollary}
(\textbf{Noncommutative Heisenberg-Robertson  Uncertainty Principles})
Let $\mathcal{E}$ be a  Hilbert C*-module  over a unital C*-algebra $\mathcal{A}$. 	Let  $A:	\mathcal{D}(A)\subseteq \mathcal{E}\to  \mathcal{E}$ and $B:	\mathcal{D}(B)\subseteq \mathcal{E}\to  \mathcal{E}$  be self-adjoint morphisms. Then for all $x \in \mathcal{D}(AB)\cap  \mathcal{D}(BA)$ with $\langle x, x \rangle =1$, we have 
\begin{enumerate}[\upshape(i)]
	\item 
	\begin{align*}
			\Delta _x(B)^2d_x(A)^2+\Delta _x(A)^2d_x(B)^2\geq\frac{(\langle [A,B]x, x \rangle +[\langle Ax, x \rangle,\langle Bx, x \rangle])(\langle x, [A,B]x \rangle +[\langle Ax, x \rangle,\langle Bx, x \rangle])}{2}.
	\end{align*}
	\item 
	\begin{align*}
		\Delta _x(A)\Delta _x(B)&\geq \frac{\|\langle [A,B]x, x \rangle +[\langle Ax, x \rangle,\langle Bx, x \rangle]\|}{2}.
	\end{align*}
\item If $\mathcal{A}$ is commutative, then 
\begin{align*}
	\Delta _x(B)^2d_x(A)^2+\Delta _x(A)^2d_x(B)^2\geq  \frac{\langle [A,B]x, x \rangle \langle x, [A,B]x \rangle }{2}.
\end{align*}
\item If $\mathcal{A}$ is commutative, then
\begin{align*}
	\Delta _x(A)\Delta _x(B)\geq	 \frac{\|\langle [A,B]x, x \rangle\|}{2}.
\end{align*}
\end{enumerate}
\end{corollary}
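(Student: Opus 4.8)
The plan is to read all four parts of the corollary straight off Theorem~\ref{NH}, discarding in each case a single manifestly positive ``anticommutator'' term; no fresh computation with $A$, $B$, or the noncommutative Cauchy--Schwarz inequality is needed. It is convenient to abbreviate $s:=\langle\{A,B\}x,x\rangle-\{\langle Ax,x\rangle,\langle Bx,x\rangle\}$ and $d:=\langle[A,B]x,x\rangle+[\langle Ax,x\rangle,\langle Bx,x\rangle]$, so that Theorem~\ref{NH}(vi) reads $\Delta_x(B)^2d_x(A)^2+\Delta_x(A)^2d_x(B)^2\ge\tfrac12(s^2-d^2)$.

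First I would prove (i). The element $s$ is self-adjoint: since $A,B$ are self-adjoint the morphism $\{A,B\}=AB+BA$ satisfies $(AB+BA)^*=BA+AB$, so $\langle\{A,B\}x,x\rangle$ is self-adjoint, and $\{\langle Ax,x\rangle,\langle Bx,x\rangle\}$ is the anticommutator of the self-adjoint elements $\langle Ax,x\rangle,\langle Bx,x\rangle$, hence self-adjoint as well. Consequently $s^2\ge0$ in $\mathcal{A}$, and throwing it away only lowers the right-hand side: $\tfrac12(s^2-d^2)\ge-\tfrac12 d^2$. Dually, $d$ is \emph{anti}-self-adjoint, being the sum of the anti-self-adjoint elements $\langle[A,B]x,x\rangle$ and $[\langle Ax,x\rangle,\langle Bx,x\rangle]$, so $-d^2=dd^*\ge0$ is precisely the positive product constituting the right-hand side of (i). Chaining the two inequalities yields (i).

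For (ii) I would invoke the first inequality of Theorem~\ref{NH}(v), namely $\Delta_x(A)\Delta_x(B)\ge\|c\|$ with $c:=\langle Ax,Bx\rangle-\langle Ax,x\rangle\langle Bx,x\rangle$. Since $c-c^*=-d$, the triangle inequality gives $\|d\|=\|c-c^*\|\le2\|c\|$, hence $\|c\|\ge\tfrac12\|d\|$, which is exactly (ii). Parts (iii) and (iv) are then the commutative specializations: when $\mathcal{A}$ is commutative the scalar commutator $[\langle Ax,x\rangle,\langle Bx,x\rangle]$ vanishes, so $d=\langle[A,B]x,x\rangle$, and substituting this into (i) and (ii) produces (iii) and (iv) verbatim.

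There is no serious obstacle here; all the content lies in the order-theoretic bookkeeping, which is the same as in the proof of Theorem~\ref{NH}. The one point that genuinely must be verified is the self-adjoint/anti-self-adjoint classification of $s$ and $d$, since it is exactly this that guarantees $s^2\ge0$ and $-d^2=dd^*\ge0$, and hence that deleting $s^2$ preserves the inequality in the C*-order. Once this is in place, the passage to norms in (ii) and the vanishing of $[\langle Ax,x\rangle,\langle Bx,x\rangle]$ in the commutative case are both immediate.
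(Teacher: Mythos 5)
Your derivation is correct and is essentially the paper's own route: parts (i) and (iii) are Theorem~\ref{NH}(vi)/(viii) with the positive square $s^2$ of the self-adjoint element $s$ discarded (equivalently, the sum of Theorem~\ref{NH}(i) and (iii)), part (ii) is literally the last line of Theorem~\ref{NH}(v) (your triangle-inequality rederivation $\|d\|=\|c-c^*\|\le 2\|c\|$ is the same computation that produces that line), and (iii)--(iv) are the commutative specializations where $[\langle Ax,x\rangle,\langle Bx,x\rangle]=0$. One caveat, which you inherit from the paper rather than introduce: since $[\langle Ax,x\rangle,\langle Bx,x\rangle]$ is skew-adjoint, the adjoint of $d$ is $\langle x,[A,B]x\rangle-[\langle Ax,x\rangle,\langle Bx,x\rangle]$ and not $\langle x,[A,B]x\rangle+[\langle Ax,x\rangle,\langle Bx,x\rangle]$, so your claim that $dd^*$ ``is precisely'' the displayed right-hand side of (i) is off by the sign of the scalar commutator in the second factor; what you actually prove is the (correct) bound by $\tfrac12\,dd^*$, the discrepancy vanishing in the commutative case and being irrelevant to the norm statement (ii).
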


 \bibliographystyle{plain}
 \bibliography{reference.bib}

\end{document}